\newcommand*{\mailto}[1]{\href{mailto:#1}{\nolinkurl{#1}}}
\newcommand{\lb}{\label}
\newtheorem{corollary}{Corollary}
\newtheorem{theorem}{Theorem}
\newtheorem{lemma}{Lemma}
\newtheorem{remark}{Remark}
\theoremstyle{proposition}
\begin{document}

\thispagestyle{empty}

\noindent{\large\bf Inverse spectral problems for the Sturm-Liouville operator with discontinuity}
\\

\noindent {\bf  Xiao-Chuan Xu}\footnote{Department of Applied
Mathematics, School of Science, Nanjing University of Science and Technology, Nanjing, 210094, Jiangsu,
People's Republic of China, {\it Email:
xiaochuanxu@126.com}}
{\bf and Chuan-Fu Yang}\footnote{Department of Applied
Mathematics, School of Science, Nanjing University of Science and Technology, Nanjing, 210094, Jiangsu,
People's Republic of China, {\it Email: chuanfuyang@njust.edu.cn}}
\\

\noindent{\bf Abstract.}
{In this work, we consider the Sturm-Liouville operator on a finite interval $[0,1]$ with discontinuous conditions at $1/2$. We prove that if the potential is known a priori on a subinterval $[b,1]$ with $b\ge1/2$, then parts of two spectra can uniquely determine the potential and all parameters in discontinuous conditions and boundary conditions. For the case $b<1/2$, parts of either one or two spectra can uniquely determine the potential and a part of parameters.}

\medskip
\noindent {\it Keywords:} {Sturm-Liouville operator; Discontinuous condition;
Inverse spectral problem; Mixed data}

\medskip
\noindent {\it 2010 Mathematics Subject Classification:} {34B24, 47E05}

\section{Introduction}

We consider the following Sturm-Liouville boundary value problem
\begin{equation}\label{fc1}
-y''(x)+q(x)y(x)=\lambda y(x), \ 0<x<1,
\end{equation}
\begin{equation}\label{btj}
y'(0)-hy(0)=0,\quad y'(1)+Hy(1)=0,
\end{equation}
with the discontinuous conditions
\begin{equation}\label{blxtj}
y\left(\frac{1}{2}\!+\!0\right)\!=\!a_1y\left(\frac{1}{2}\!-\!0\right),\
y'\left(\frac{1}{2}\!+\!0\right)\!=\!a_1^{-1}y'\left(\frac{1}{2}\!-\!0\right)\!+\!a_2y\left(\frac{1}{2}\!-\!0\right).
\end{equation}
Here $\lambda$ is the spectral parameter,
$q(\cdot), h, H, a_1, a_2$ are real, $q(\cdot)\in L^2[0, 1]$, and $a_1>0$.
Denote the boundary value problem (\ref{fc1}), (\ref{btj}) and
(\ref{blxtj}) by $B=B(q,h,H,a_1,a_2)$.

The boundary value problems with a discontinuous point inside the interval arise in
mathematics, mechanics, radio electronics, geophysics, and other
fields of science and technology. Such problems are connected
with discontinuous material properties (see, for example,
\cite{AND,HAL,KKW,KRU}).

The problem $B$ has been studied by many scholars (see \cite{AM,FYU2,KRU,SYU,CWI,YYA1,YYA2,YYA3,YYA,YUR1} and the references therein).
In general, for recovering the potential function on the whole
interval and all parameters in discontinuity conditions and boundary conditions, it is necessary to specify two spectra of the problem $B$ with different boundary conditions (see \cite{RS,YUR1}). We are interested in recovering the potential and all parameters in discontinuous conditions and boundary conditions from parts of two spectra provided the potential is known a priori on a subinterval. This is the so-called inverse spectral problem with mixed given data, which has been considered by some scholars (see, for example, \cite{HAL,SYU,YYA1}). Specifically, the authors of \cite{SYU} assumed that $a_1,a_2$ and $H$ are given, and   proved that if $q(x)$ is known on $[b, 1]$ with $b<1/2$, then less than one spectra can uniquely determine $h$ and $q(x)$ on $[0, b]$, and if $b=1/2$ then it needs to specify the whole one spectra. The paper \cite{YYA1} dealt with the inverse problem by using Gesztesy-Simon's method under the assumption that $q(x)$ is known on more than half of the whole interval, and gave a uniqueness theorem. We also note that inverse problems with mixed given data for  differential operators were studied by many authors (see, for example, \cite{BU,BC,FYU3,FYU4,GS1,GS2,MH,MH1,OKC,RAM,MPI,RFB,RB,SYU,YYA,YYA3,YUR2}).

In this paper, we study the inverse spectral problems with mixed given data for the problem $B$ under the assumption that $q(x)$ is known on $[b, 1]$ with $b\in(0,1]$. The main method is partly based on ideas in \cite{RFB,RAM}, which require asymptotics of the
eigenvalues and eigenfunctions, and some techniques of complex analysis.

\section{Main Results}
 Denote $B_\infty=B(q,h,\infty,a_1,a_2)$, which means that $y'(1)+Hy(1)=0$ is replaced by $y(1)=0$ in (\ref{btj}). Note that the operators $B$ and $B_\infty$ are self-adjoint. Let $\Lambda_1:=\{\lambda_n\}_{n\in \mathbb{N}_0}$ and $\Lambda_2:=\{\mu_n\}_{n\in \mathbb{N}_0}$ be the spectra of the problems $B$ and $B_\infty$, respectively, where $\mathbb{N}_0:=\mathbb{N}\cup\{0\}$. It is well known that the sequence $\{\lambda_n\}_{n\in \mathbb{N}_0}$ and $\{\mu_n\}_{n\in \mathbb{N}_0}$ satisfy
the following asymptotics \cite{SYU,YYA2,YUR1}
\begin{equation}\label{lbdn}
\sqrt{\lambda_n}=n\pi+\frac{\omega+(-1)^n\omega_1}{n\pi}+o\left(\frac{1}{n}\right),
\end{equation}
and
\begin{equation}\label{xc}
\sqrt{\mu_n}=\gamma_n+\frac{\omega_0}{n\pi}+o\left(\frac{1}{n}\right),\quad \gamma_n:=\left(n+\frac{1}{2}\right)\pi+(-1)^n\arcsin a,
\end{equation}
respectively. Here
\begin{equation}\label{xc1}
 \left\{\begin{split}
         &a=\frac{a_1-a_1^{-1}}{a_1+a_1^{-1}},\\
          &\omega=\omega_0+H,\quad \omega_0=\frac{a_2}{a_1+a_1^{-1}}+\frac{1}{2}\int_0^1q(t)dt+h,\\
         & \omega_1=\frac{a_2}{a_1+a_1^{-1}}+a\left[\int_{\frac{1}{2}}^1q(t)dt+H-\left(\frac{1}{2}\int_0^1q(t)dt+h\right)\right].
        \end{split}\right.
\end{equation}

Denote $x_+:=\max\{x,0\}$ and for the given real sequence $\{x_n\}_{n\in \mathbb{N}_0}:=A$, define a counting function
\begin{equation*}\label{cx3}
  N_A(r):={\rm{\# }}\{n\in\mathbb{N}_0:x_n\le r^2,\ r>0\}.
\end{equation*}
Let $S_i$ $(i=1,2)$ be the subsets of the spectral sets $\Lambda_i$, respectively. Assume that the subscripts of the elements in $S_1$ satisfy the condition \emph{(I): including infinitely many even and odd numbers}.

Now we state the main results of this article.
\begin{theorem}\lb{2.0}
Assume that $q(x)$ is known a priori on a.e. $[b,1]$ with $b\in[\frac{1}{2},1]$, and $S_1$ satisfies the condition (I).
If the spectral subsets $S_i$ $(i=1,2)$ satisfy

{(i)}
for some $r_0>0$ and arbitrary $r\ge r_0$ there holds
\begin{equation}\label{xc2}
N_{S_1\cup S_2}(r)\ge{\sigma} N_{\Lambda_1\cup\Lambda_2}(r)-m(r),\quad \sigma>b,
\end{equation}
where $m(r)=o(r)$ as $r\to\infty$; or

{(ii)}
there exist positive constants $\sigma_i$ such that
 \begin{equation}\label{cx0}
\!\!\! \sum_{\kappa_{1,n}\in S_1,\ n\ge0 }\!\!\!\!\!\!\!\!\frac{(\kappa_{1,n}-n^2\pi^2/\sigma_1^2)_+}{n^2}+ \!\!\!\!\!\!\!\! \sum_{\kappa_{2,n}\in S_2,\ n\ge0 }\!\!\!\!\!\!\!\frac{(\kappa_{2,n}-\gamma_n^2/\sigma_2^2)_+}{n^2}<\infty,\quad \sigma_1+\sigma_2=2b,
 \end{equation}
then $S_1\cup S_2$ uniquely determines $q(x)$ a.e. on $[0,b]$ and $h,H,a_1,a_2$.
\end{theorem}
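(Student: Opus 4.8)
The plan is to argue by uniqueness. Suppose two problems $B=B(q,h,H,a_1,a_2)$ and $\widetilde B=B(\widetilde q,\widetilde h,\widetilde H,\widetilde a_1,\widetilde a_2)$ are both consistent with the given data, i.e. $S_1\subset\Lambda_1\cap\widetilde\Lambda_1$, $S_2\subset\Lambda_2\cap\widetilde\Lambda_2$, and $q=\widetilde q$ a.e. on $[b,1]$; the goal is to show $q=\widetilde q$ on $[0,b]$ and $h=\widetilde h,\ H=\widetilde H,\ a_1=\widetilde a_1,\ a_2=\widetilde a_2$. Let $\varphi(x,\lambda)$ be the solution of (\ref{fc1}) with potential $q$ satisfying the left condition in (\ref{btj}) and the transmission conditions (\ref{blxtj}), normalized by $\varphi(0,\lambda)=1,\ \varphi'(0,\lambda)=h$, and let $\widetilde\varphi$ be its analogue for $\widetilde q$. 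Then $\Delta_1(\lambda)=\varphi'(1,\lambda)+H\varphi(1,\lambda)$ and $\Delta_2(\lambda)=\varphi(1,\lambda)$ are the characteristic functions of $B$ and $B_\infty$, with zero sets $\Lambda_1,\Lambda_2$; define $\widetilde\Delta_1,\widetilde\Delta_2$ likewise. By hypothesis each $\lambda_n\in S_1$ is a common zero of $\Delta_1,\widetilde\Delta_1$ and each $\mu_n\in S_2$ a common zero of $\Delta_2,\widetilde\Delta_2$.

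First I would pin down some parameters from the asymptotics. Since $\sqrt{\lambda_n}=n\pi+o(1)$ and $\sqrt{\mu_n}=(n+\tfrac12)\pi+o(1)$, the index $n$ of each sufficiently large element of $S_i$ is read off from its value and agrees for $B$ and $\widetilde B$. A short counting estimate shows that either hypothesis (i) or (ii) forces $S_2$ to be infinite: otherwise $N_{S_1\cup S_2}(r)\le N_{\Lambda_1}(r)+O(1)=\tfrac{r}{\pi}+o(r)$, contradicting the required lower bound $\tfrac{2\sigma}{\pi}r$ with $\sigma>b\ge\tfrac12$. Along any infinite subsequence of $S_2$, (\ref{xc}) gives $\arcsin a=\lim(\sqrt{\mu_n}-(n+\tfrac12)\pi)$ and then $\omega_0=\lim n\pi(\sqrt{\mu_n}-\gamma_n)$; computed from the common values $\mu_n$, these yield $a=\widetilde a$ (hence $a_1=\widetilde a_1$) and $\omega_0=\widetilde\omega_0$. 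Condition (I) supplies infinitely many even and odd indices in $S_1$, so (\ref{lbdn}) gives $\omega+\omega_1$ and $\omega-\omega_1$ as limits along the two parities, whence $\omega=\widetilde\omega$. Since $H=\omega-\omega_0$ by (\ref{xc1}), this gives $H=\widetilde H$.

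The heart of the argument is an entire function of small exponential type vanishing on $S_1\cup S_2$. I would take
\begin{equation*}
F(\lambda):=\Delta_1(\lambda)\widetilde\Delta_2(\lambda)-\widetilde\Delta_1(\lambda)\Delta_2(\lambda)=\bigl[\varphi'(1,\lambda)\widetilde\varphi(1,\lambda)-\varphi(1,\lambda)\widetilde\varphi'(1,\lambda)\bigr]+(H-\widetilde H)\varphi(1,\lambda)\widetilde\varphi(1,\lambda).
\end{equation*}
At every $\lambda_n\in S_1$ both $\Delta_1,\widetilde\Delta_1$ vanish and at every $\mu_n\in S_2$ both $\Delta_2,\widetilde\Delta_2$ vanish, so $F$ vanishes on $S_1\cup S_2$. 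Because $q=\widetilde q$ on $[b,1]$ and there is no discontinuity in $(b,1]$ (as $b\ge\tfrac12$), the transfer matrix of the equation from $b$ to $1$ is the same for both problems and has determinant $1$; a direct computation then shows that the bracketed cross-Wronskian equals its value at $x=b$, namely $W_b(\lambda):=\varphi'(b,\lambda)\widetilde\varphi(b,\lambda)-\varphi(b,\lambda)\widetilde\varphi'(b,\lambda)$. Using $H=\widetilde H$ from the previous step, $F(\lambda)=W_b(\lambda)$. From the integral representations of $\varphi,\widetilde\varphi$ (valid across the jump at $1/2$, which alters only the constants and not the exponential rate), $W_b(z^2)$ is entire and even in $z=\sqrt\lambda$, of exponential type exactly $2b$, and polynomially bounded on the real axis.

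It remains to show $W_b\equiv0$ and to conclude. Under hypothesis (i), the zeros of $W_b(z^2)$ include $\pm\sqrt{\kappa}$ for every $\kappa\in S_1\cup S_2$, so its zero-counting function on $[-r,r]$ is at least $2N_{S_1\cup S_2}(r)\ge\tfrac{4\sigma}{\pi}r+o(r)$; since a Cartwright-class function of type $2b$ admits at most $\tfrac{4b}{\pi}r+o(r)$ zeros there and $\sigma>b$, necessarily $W_b\equiv0$. Under hypothesis (ii) I would argue by entire-function theory: $W_b(z^2)$ has type $2b$ and vanishes at $\pm\sqrt{\kappa_{1,n}}$ and $\pm\sqrt{\kappa_{2,n}}$, and the summability (\ref{cx0}) with $\sigma_1+\sigma_2=2b$ is exactly a Levinson-type condition making such a zero set incompatible with type $2b$ unless the function is trivial (equivalently, that the associated exponential system is complete in $L^2(-b,b)$), so again $W_b\equiv0$. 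Finally, $W_b\equiv0$ means the Weyl $m$-functions $\varphi'(b,\lambda)/\varphi(b,\lambda)$ and $\widetilde\varphi'(b,\lambda)/\widetilde\varphi(b,\lambda)$ of the two problems on $[0,b]$ (with the left condition and the transmission at $1/2$) coincide, and the classical Weyl-function uniqueness for the discontinuous Sturm-Liouville operator on $[0,b]$ yields $q=\widetilde q$ a.e. on $[0,b]$ and $h=\widetilde h,\ a_1=\widetilde a_1,\ a_2=\widetilde a_2$; with $H=\widetilde H$ this proves the theorem. The main obstacle is this last analytic core—verifying that the transmission at $1/2$ leaves the type of $W_b$ equal to $2b$ and invoking the sharp completeness criterion, especially under the delicate summability hypothesis (ii), where the clean counting bound of (i) is unavailable and one must compare $W_b(z^2)$ with sine-type functions whose zeros lie at $n\pi/\sigma_1$ and $\gamma_n/\sigma_2$.
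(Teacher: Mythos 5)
Your core construction coincides with the paper's, just packaged differently: your $F=\Delta_1\tilde\Delta_2-\tilde\Delta_1\Delta_2$ is (up to sign) exactly the paper's $g(k)=(\tilde y'y-\tilde yy')(1,\lambda)$ from (\ref{xc12}); your case (i) argument (zero count at least $\tfrac{4\sigma}{\pi}r+o(r)$ against the $\tfrac{4b}{\pi}r$ bound for type $2b$) is the paper's application of Lemma \ref{l2}; and your plan for case (ii) --- divide by the canonical product over $S_1\cup S_2$ and compare with a sine-type function whose zeros are $n\pi/\sigma_1$ and $\gamma_n/\sigma_2$ --- is precisely the paper's comparison function $\Phi_0(\lambda)=\sigma_1^{-1}\sqrt\lambda\,\sin(\sigma_1\sqrt\lambda)\left[\cos(\sigma_2\sqrt\lambda)+a\right]$ in (\ref{xc14}) combined with Lemma \ref{l3}; you leave the quotient estimate $|E(it)|\le C|t|^{-1/2}$ unexecuted but correctly identify its ingredients, including the $\arcsin a$ shift in the reference zeros, so this is a sketch of the paper's own argument rather than a gap in the idea. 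The genuine divergence is the endgame. The paper keeps $g$ in the integral form (\ref{yywm1}), so $g\equiv0$ becomes an identity of type (\ref{xc6}), and its self-contained Lemma \ref{l1} (completeness of the products $y\tilde y$, with the careful bookkeeping of the $A_2,A_3,A_4$ terms and the case split $b\ge\tfrac34$ versus $b<\tfrac34$) delivers $q=\tilde q$ on $[0,b]$, $h=\tilde h$ and $a_2=\tilde a_2$ simultaneously. You instead propagate the cross-Wronskian from $x=1$ down to $x=b$ (legitimate, since $q=\tilde q$ on $[b,1]$ and $\tfrac12\notin(b,1]$) and read $W_b\equiv0$ as equality of the Weyl functions $\varphi'(b,\lambda)/\varphi(b,\lambda)$, then cite uniqueness for the discontinuous operator on $[0,b]$. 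This buys brevity but outsources exactly the content of Lemma \ref{l1}: the cited theorem must determine not only $q$ and $h$ but also the jump constant $a_2$ (such results do exist, in the spirit of \cite{HAL,AM,YUR1}, so the appeal is defensible), whereas the paper proves what it needs from scratch.

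Two loose ends you should tie up. First, at $b=\tfrac12$ the discontinuity sits at the right endpoint of $[0,b]$, so "the discontinuous operator on $[0,b]$" degenerates: $W_b$ must be taken at $\tfrac12+0$, and equality of the $m$-functions there only gives $m_--\tilde m_-=a_1(\tilde a_2-a_2)$ for the regular problems on $[0,\tfrac12]$; you then need the asymptotics $m(\tfrac12-0,\lambda)=\sqrt{-\lambda}+o(1)$ as $\lambda\to-\infty$ to split off the constant and conclude $a_2=\tilde a_2$ before invoking the classical Marchenko theorem --- the paper's Lemma \ref{l1} covers this case uniformly. Second, your case (ii) is only a plan; the actual work (the lower bound $|\Phi_0(it)|\ge C\sqrt{|t|}\,e^{2b|\mathrm{Im}\sqrt{it}|}$, the factorwise estimate of $|\Phi_0/\Phi|$ using the one-sided summability (\ref{cx0}), and the order-$\tfrac12$ Phragm\'en--Lindel\"of step) is where the theorem's hypothesis $\sigma_1+\sigma_2=2b$ is actually consumed. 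On the plus side, your observations that hypothesis (i) forces $S_2$ to be infinite and your parity bookkeeping extracting $a_1=\tilde a_1$ and $H=\tilde H$ from (\ref{lbdn})--(\ref{xc1}) make explicit details the paper compresses into a single sentence.
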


\begin{corollary}\lb{2.1}
Assume that $q(x)$ on $[\frac{1}{2},1]$, $a_1$ and $H$ are known a prior, then $\Lambda_1$ uniquely determines $q(x)$ a.e. on $[0,\frac{1}{2}]$, $h$ and $a_2$.
\end{corollary}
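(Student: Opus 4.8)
The plan is to deduce Corollary \ref{2.1} from Theorem \ref{2.0} by specializing the data. I would set $b=\tfrac12$, take $S_1=\Lambda_1$ to be the \emph{entire} first spectrum, and take $S_2=\emptyset$ (so the second spectrum plays no role). Condition (I) is then automatic, since the index set of $S_1$ is all of $\mathbb{N}_0$ and hence contains infinitely many even and odd indices. The only real work is to verify one of the quantitative hypotheses (i) or (ii) of Theorem \ref{2.0} for this choice.

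My first observation is that hypothesis (i) is \emph{not} available here. From the asymptotics (\ref{lbdn}) and (\ref{xc}) both spectra have counting density $N_{\Lambda_i}(r)\sim r/\pi$, and since the two sequences interlace we get $N_{\Lambda_1\cup\Lambda_2}(r)\sim 2r/\pi$, whereas $N_{S_1\cup S_2}(r)=N_{\Lambda_1}(r)\sim r/\pi$. Thus the largest admissible exponent in (\ref{xc2}) is $\sigma=\tfrac12=b$, which violates the strict requirement $\sigma>b$. I would therefore verify hypothesis (ii) instead.

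For (ii) I would choose any $\sigma_1\in(0,1)$ and set $\sigma_2=1-\sigma_1$, so that $\sigma_1+\sigma_2=1=2b$ and both are positive. Squaring (\ref{lbdn}) gives $\lambda_n=n^2\pi^2+O(1)$, while $\sigma_1<1$ forces $n^2\pi^2/\sigma_1^2>n^2\pi^2$. Consequently $\lambda_n-n^2\pi^2/\sigma_1^2=n^2\pi^2(1-\sigma_1^{-2})+O(1)\to-\infty$, so the positive parts $(\lambda_n-n^2\pi^2/\sigma_1^2)_+$ vanish for all large $n$; the first series in (\ref{cx0}) therefore has only finitely many nonzero terms and the second series is empty. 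Hence (ii) holds.

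Applying Theorem \ref{2.0} with these choices, $S_1\cup S_2=\Lambda_1$ uniquely determines $q(x)$ a.e. on $[0,\tfrac12]$ together with $h,H,a_1,a_2$; a fortiori, once $a_1$ and $H$ are known a priori, $\Lambda_1$ determines $q(x)$ a.e. on $[0,\tfrac12]$, $h$ and $a_2$, which is the assertion. The one delicate point is the dichotomy in the second paragraph: because a single full spectrum exactly realizes the borderline density $\sigma=b$, the softer hypothesis (i) is unusable and one is forced onto the refined summation condition (ii). What makes the corollary work is precisely that each $\lambda_n$ sits just below the threshold $n^2\pi^2/\sigma_1^2$ for every $\sigma_1<1$, so (ii) holds with room to spare even though (i) fails.
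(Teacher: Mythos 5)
There is a genuine gap at the final step: you invoke Theorem \ref{2.0}(ii) with $S_2=\emptyset$ while keeping $\sigma_2=1-\sigma_1>0$, and the theorem cannot be used as a black box in that regime. Condition (\ref{cx0}) constrains the elements of $S_i$ only from above (through the positive parts); it encodes the needed \emph{lower} density bound on $S_i$ only when $S_i$ is an infinite sequence re-indexed consecutively by $n\in\mathbb{N}_0$, as in the proof of part (ii), where one recalls $\{\kappa_{i,n}\}_{n\in\mathbb{N}_0}=S_i$. With $S_2=\emptyset$ the second sum is vacuous, so your hypothesis holds ``for free'' while removing exactly the zeros the proof needs: in the estimate (\ref{g7}) the comparison function $\Phi_0(\lambda)=\sigma_1^{-1}\sqrt{\lambda}\,\sin(\sigma_1\sqrt{\lambda})\left[\cos(\sigma_2\sqrt{\lambda})+a\right]$ has type $\sigma_1+\sigma_2=2b$, but if $\Phi$ contains no factors $\left(1-\lambda/\kappa_{2,n}\right)$, the factor $\cos(\sigma_2\sqrt{it})+a$ is left uncancelled and the bound degenerates to $|E(it)|\le C e^{\sigma_2|\mathrm{Im}\sqrt{it}|}/\sqrt{|t|}$, which does not tend to zero, so (\ref{g6}) is not obtained. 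That your reading of the theorem is too generous is shown by a reductio: take $b=1$, $\sigma_1=\sigma_2=1$, $S_1=\Lambda_1$, $S_2=\emptyset$; then (\ref{cx0}) holds just as trivially (since $\lambda_n=n^2\pi^2+O(1)$ by (\ref{lbdn})), and the ``theorem'' would assert that a single spectrum, with no a priori information on $q$, determines $q$ and all parameters --- which is false by classical one-spectrum nonuniqueness. So either the theorem implicitly requires $S_2$ infinite (and you cannot choose it empty), or your reading makes it false; in both cases the citation proves nothing.

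The paper avoids this by \emph{not} citing the statement: it sets $b=\frac12$, $\sigma_1=1=2b$, $\sigma_2=0$, with the explicit convention that $\sigma_2=0$ means $S_2=\emptyset$, and re-runs part (ii) of the proof. Then $\cos(\sigma_2\sqrt{\lambda})+a\equiv 1+a$ is a nonzero constant (note $|a|<1$ since $a_1>0$), so $\Phi_0(\lambda)=(1+a)\sqrt{\lambda}\sin\sqrt{\lambda}$ has exactly type $2b=1$, matched by $\Phi$ built from the full spectrum $\Lambda_1$, and (\ref{cx0}) reduces to $\sum_{n\ge1}(\lambda_n-n^2\pi^2)_+/n^2<\infty$, which holds by (\ref{lbdn}); the estimates (\ref{xc16})--(\ref{cx6}) survive this degeneration and give $|E(it)|\le C/\sqrt{|t|}$, after which Lemma \ref{l3} and Lemma \ref{l1} (the case $b\le\frac12$, using (\ref{yyw1}) only) yield $q=\tilde q$ on $[0,\frac12]$, $a_2=\tilde a_2$ and $h=\tilde h$. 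Your proposal is repaired simply by replacing $\sigma_1\in(0,1)$ with $\sigma_1=1$, $\sigma_2=0$ and verifying this degenerate run of part (ii) directly. Your two side observations --- that condition (I) is automatic for $S_1=\Lambda_1$, and that hypothesis (i) fails because one full spectrum realizes only the borderline density $\sigma=b$ --- are correct and consistent with why the paper routes the corollary through part (ii); indeed the fact that your condition held ``with room to spare'' for every $\sigma_1<1$ was the warning sign that the vacuous second sum was concealing the missing density rather than certifying it.
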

\begin{remark}
One may regularly choose sequences $\{\lambda_{m_1(n)}\}_{n\in\mathbb{N}_0}= S_1$ and $\{\mu_{m_2(n)}\}_{n\in\mathbb{N}_0}= S_2$ with $m_i(n)$ $(i=1,2)$ satisfying
\begin{equation}\label{vcx}
    m_1(n)=\frac{n}{\sigma_1}(1+\epsilon_{1,n}),\
  m_2(n)=\frac{\gamma_n}{\sigma_2}(1+\epsilon_{2,n}),\ \epsilon_{i,n}\to 0,\ n\to\infty.
\end{equation}
Note that the choice (\ref{vcx}) is a particular case of that in Theorem \ref{2.0}, which was used in some earlier works.
\end{remark}
\begin{remark}
If $H$ is given then the condition (I) in Theorem \ref{2.0} can be removed. In addition, one restricts $h<\infty$ in Theorem \ref{2.0},
actually the method used in this paper can also be applicable in the case $h=\infty$ (i.e., $y(0)=0$ in (\ref{btj})).
\end{remark}
\begin{remark}
From the proofs of Theorem \ref{2.0} and Corollary \ref{2.1} one can find that if $b\in (0,\frac{1}{2})$ the results in Theorem \ref{2.0} also hold provided that $a_1,a_2$ and $H$ are given. That is to say, if $q(x)$ is known on $[b,1]$ with $b\in (0,\frac{1}{2}),$ $a_1,a_2$ and $H$ are given, then $q(x)$ and $h$ can be uniquely determined by parts of either one or two spectra which satisfy the same conditions as those in Theorem \ref{2.0}.
\end{remark}

Theorem \ref{2.0} can be generalized to the case for parts of more than two spectra. For convenience, denote $H_1:=H,\;H_2:=\infty$. For the fixed $N\ge3$, denote $B_i:=B(q,h,H_i,a_1,a_2)$, where $H_i(\ne H)\in \mathbb{R}$ and $H_i\ne H_j$ if $i\ne j$. Denote $\gamma_{i,n}:=n\pi$ for $i=\overline{1,N}\setminus\{2\}$ and $\gamma_{2,n}:=\gamma_n,n\in\mathbb{N}_0$.

 Let $S_i$ ($i=\overline{1,N}$) be the subsets of the sets $\Lambda_i$, respectively, where $\Lambda_i$ denote the spectral sets of the problem $B_i$ ($i=\overline{1,N}$). Denote $S:=\cup_{i=1}^NS_i$ and $\Lambda:=\cup_{i=1}^N\Lambda_i$.

The generalization of Theorem \ref{2.0} is as follows.
\begin{theorem}\lb{3.0}
Assume that $q(x)$ is known a priori on a.e. $[b,1]$ with $b\in[\frac{1}{2},1]$, and $S_i$ $(i=\overline{1,N}\setminus\{2\})$ satisfy the condition (I).
If the sets $S_i $ $(i=\overline{1,N})$ satisfy that

(i) for some $r_0>0$ and arbitrary $r\ge r_0$ there holds
\begin{equation*}
N_S(r)\ge{\sigma} N_{\Lambda}(r)-m(r),\quad \sigma>\frac{2b}{N},
\end{equation*}
where $m(r)=o(r)$ as $r\to\infty$;
or

(ii)
there exist positive constants $\sigma_i$ such that
 \begin{equation*}
  \sum_{i=1}^N\sum_{\kappa_{i,n}\in S_i,\ n\ge0}\!\!\!\!\!\!\frac{(\kappa_{i,n}-\gamma_{i,n}^2/\sigma_i^2)_+}{n^2}<\infty,\quad \displaystyle\sum_{i=1}^N\sigma_i=2b,
 \end{equation*}
then $S$ uniquely determines $q(x)$ a.e. on $[0,b]$ and $h,\;H_i,\;a_1,\;a_2.$
\end{theorem}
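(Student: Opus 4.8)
The plan is to argue by comparison. I would suppose that a second collection $\tilde B_i=B(\tilde q,\tilde h,\tilde H_i,\tilde a_1,\tilde a_2)$, built the same way (with $\tilde H_2=\infty$), produces the same spectral subsets $S_i$ and satisfies $\tilde q=q$ a.e. on $[b,1]$, and then show that necessarily $q=\tilde q$ on $[0,b]$, $h=\tilde h$, $H_i=\tilde H_i$, $a_1=\tilde a_1$, $a_2=\tilde a_2$. Let $\varphi(x,\lambda)$ solve \eqref{fc1} with $\varphi(0,\lambda)=1$, $\varphi'(0,\lambda)=h$ under the jump conditions \eqref{blxtj}, and let $\tilde\varphi$ be its analogue for the tilde-data; then the eigenvalues of $B_i$ (resp.\ $\tilde B_i$) are the zeros of $\Delta_i(\lambda)=\varphi'(1,\lambda)+H_i\varphi(1,\lambda)$ (resp.\ $\tilde\Delta_i$), with the convention $\Delta_2=\varphi(1,\cdot)$. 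The central object is the Wronskian-type entire function
\[
\mathcal D(\lambda)=\varphi(x,\lambda)\tilde\varphi'(x,\lambda)-\varphi'(x,\lambda)\tilde\varphi(x,\lambda),
\]
which, since $q=\tilde q$ on $[b,1]$ and there is no discontinuity in $(b,1)$ for $b\ge\frac12$, is independent of $x\in[b,1]$. I would exploit both of its values: evaluated at $x=b$ it is a product of two factors of exponential type $b$ in $s=\sqrt\lambda$ (the waves reflected at $\frac12$ contribute only type $1-b\le b$), so $\mathcal D(s^2)$ is entire of exponential type at most $2b$; evaluated at $x=1$ it reads $\mathcal D(\lambda)=\varphi(1,\lambda)\tilde\varphi'(1,\lambda)-\varphi'(1,\lambda)\tilde\varphi(1,\lambda)$, the form convenient for detecting its zeros.

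First I would pin down the discrete parameters from the asymptotics \eqref{lbdn}--\eqref{xc1}. Since the $S_i$ are shared, matching eigenvalues carry the same index for large $n$. From the infinite subset $S_2\subset\Lambda_2$ and \eqref{xc}, the alternating term $(-1)^n\arcsin a$ is constant along a fixed parity, so $\arcsin a=\arcsin\tilde a$, hence $a=\tilde a$ and $a_1=\tilde a_1$; the next order then forces $\omega_0=\tilde\omega_0$. For each finite-$H$ index $i\in\overline{1,N}\setminus\{2\}$, condition (I) supplies infinitely many even and infinitely many odd indices in $S_i$, which lets me separate the non-alternating coefficient $\omega^{(i)}:=\omega_0+H_i$ from the alternating $\omega_1$ in \eqref{lbdn}; matching with the tilde-problem gives $\omega^{(i)}=\tilde\omega^{(i)}$, and together with $\omega_0=\tilde\omega_0$ this yields $H_i=\tilde H_i$ for every $i$. (If $H$ is prescribed this step is unnecessary, which explains why condition (I) may then be dropped.)

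Next I would verify that $\mathcal D$ vanishes on all of $S$. For $\lambda_0\in S_2$ one has $\varphi(1,\lambda_0)=\tilde\varphi(1,\lambda_0)=0$, so $\mathcal D(\lambda_0)=0$; for $\lambda_0\in S_i$ with $i\ne2$ one has $\varphi'(1,\lambda_0)=-H_i\varphi(1,\lambda_0)$ and $\tilde\varphi'(1,\lambda_0)=-\tilde H_i\tilde\varphi(1,\lambda_0)$, whence $\mathcal D(\lambda_0)=(H_i-\tilde H_i)\varphi(1,\lambda_0)\tilde\varphi(1,\lambda_0)=0$ by the previous step. Thus $\mathcal D(s^2)$ is an even entire function of exponential type at most $2b$ vanishing at $\pm\sqrt{\lambda_0}$ for every $\lambda_0\in S$. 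Under hypothesis (i), $N_S(r)\ge\sigma N_\Lambda(r)-m(r)$ with $\sigma N>2b$ and $N_\Lambda(r)\sim Nr/\pi$ forces the density of the real zeros $\{\sqrt{\lambda_0}\}$ to exceed $2b/\pi$, the maximum compatible with exponential type $2b$; under hypothesis (ii), the summability of $\sum_i\sum_{\kappa_{i,n}\in S_i}(\kappa_{i,n}-\gamma_{i,n}^2/\sigma_i^2)_+/n^2$ with $\sum_i\sigma_i=2b$ is precisely the Cartwright-type density statement that is incompatible with type $2b$ unless the function vanishes. In either case the density argument in the spirit of \cite{RFB,RAM} gives $\mathcal D\equiv0$.

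Finally, $\mathcal D\equiv0$ means $\varphi'(b,\lambda)/\varphi(b,\lambda)\equiv\tilde\varphi'(b,\lambda)/\tilde\varphi(b,\lambda)$, i.e.\ the Weyl $m$-functions at the interior point $b$ of the two left problems on $[0,b]$ (boundary parameter $h$ at $0$, jump $a_1,a_2$ at $\frac12$) coincide; the classical inverse uniqueness theorem for the Weyl function of a Sturm--Liouville operator with one interior discontinuity then yields $q=\tilde q$ a.e.\ on $[0,b]$ together with $h=\tilde h$, $a_1=\tilde a_1$, $a_2=\tilde a_2$. Combined with $H_i=\tilde H_i$ and the prescribed agreement on $[b,1]$, all data coincide, and the case $N=2$ recovers Theorem~\ref{2.0}. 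I expect the main obstacle to be the density step: one must fix the exact exponential type $2b$ of $\mathcal D$ (using the constancy of the Wronskian on $[b,1]$ to obtain $2b$ rather than the crude $2$ from the $x=1$ representation) and match it against the weighted counting hypotheses (i)/(ii), invoking the correct entire-function theorem to force $\mathcal D\equiv0$; the asymptotic bookkeeping of the first step, while delicate, is routine.
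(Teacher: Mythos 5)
Your architecture is sound and, for most of its length, coincides with the paper's: your Wronskian $\mathcal{D}$ \emph{is} the paper's function $g$ --- the paper defines $g$ by the integral expression \eqref{yywm1} and then proves the identity \eqref{xc12}, $g(k)=(\tilde y'y-\tilde yy')(1,\lambda)$ --- and your first step (reading $a_1$ and $\omega_0$ from the asymptotics \eqref{xc} along $S_2$, then each $H_i$ from condition (I) on $S_i$, $i\ne2$), the vanishing of $\mathcal{D}$ on $S$, and the case (i) count ($n(r)\ge 2N_S(r)\sim 2\sigma Nr/\pi$ against the Levin bound $\frac{1}{2\pi}\int_0^{2\pi}h_g(\theta)\,d\theta\le 4b/\pi$ coming from exponential type $2b$, via Lemma \ref{l2}) reproduce the paper's case (i) exactly. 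The genuine divergence is the endgame. The paper never passes to $m$-functions: it reads $g\equiv0$ as the identity \eqref{xc6} with $Q=\tilde q-q$, $M_5=a_1(\tilde a_2-a_2)$, $M_6=\tilde h-h$, and its self-contained Lemma \ref{l1} (transformation kernels, completeness of $\{\cos(2kt)\}_{k\ge0}$, Volterra equations) yields $q=\tilde q$ a.e.\ on $[0,b]$, $a_2=\tilde a_2$ and $h=\tilde h$ simultaneously. Your alternative --- equality of $\varphi'(b,\cdot)/\varphi(b,\cdot)$ plus a Borg--Marchenko-type uniqueness theorem for problems with a transmission condition --- is available in the literature the paper itself cites (e.g.\ Yurko), so it works for $b>\frac12$; but at $b=\frac12$ it is stated incorrectly: by \eqref{yz}, constancy of the Wronskian only reaches $x=\frac12+0$, and $\mathcal{D}(\frac12-0,\lambda)=-a_1(\tilde a_2-a_2)(\varphi\tilde\varphi)(\frac12-0,\lambda)\not\equiv0$ until $a_2=\tilde a_2$ is known. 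You would have to formulate the $m$-function equality at $b+0$, i.e.\ through the jump, and then still extract $a_2$ from the transformed relation; your phrase ``jump $a_1,a_2$ at $\frac12$'' glosses over precisely the term that the paper's Lemma \ref{l1} is designed to kill.

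The one outright gap is case (ii), which you assert rather than prove: the summability hypothesis is not ``precisely a Cartwright-type density statement,'' and no off-the-shelf theorem applies in the form you invoke. What is needed (and what generalizes from the paper's $N=2$ computation with only notational change) is the following block: set $G(\lambda)=g(k)$ and form the canonical product $\Phi(\lambda)=\prod_{i=1}^N\prod_{n}\bigl(1-\lambda/\kappa_{i,n}\bigr)$, of order $\le\frac12$ since $\kappa_{i,n}^{-1}=O(n^{-2})$, so that $E=G/\Phi$ is entire of order $\le\frac12$ by \eqref{gg3}; compare $\Phi$ with the model $\Phi_0(\lambda)=[\cos(\sigma_2\sqrt\lambda)+a]\prod_{i\ne2}\sigma_i^{-1}\sqrt\lambda\,\sin(\sigma_i\sqrt\lambda)$, whose zeros are the comparison points $\gamma_{i,n}^2/\sigma_i^2$ and which obeys $|\Phi_0(it)|\ge C|t|^{(N-1)/2}e^{2b|{\rm Im}\sqrt{it}|}$ by \eqref{xc16} together with $\sum_i\sigma_i=2b$; then the ratio estimate of \eqref{g7}--\eqref{cx6} converts hypothesis (ii) into $|E(it)|\le C|t|^{-(N-1)/2}\to0$, and Lemma \ref{l3} forces $E\equiv0$, hence $G\equiv0$. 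With that block inserted, and the $b=\frac12$ endgame repaired (or simply replaced by the paper's Lemma \ref{l1}), your proposal becomes a complete proof.
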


\section{Preliminaries}
In this section, we provide some preliminaries for proving the main results.

Together with the problem $B$ we consider a boundary value problem
$\tilde{B}=B(\tilde{q},\tilde{h},\tilde{H},\tilde{a}_1,\tilde{a}_2)$ of the same form but with
different coefficients $\tilde{q},\tilde{h},\tilde{H},\tilde{a}_1$ and $\tilde{a}_2$. We agree that if a certain
symbol $\delta$ denotes an object related to $B$, then
$\tilde{\delta}$ will denote an analogous object related to
$\tilde{B}$.

Let us recall the product of eigenfunctions \cite{HAL,CWI}.
Let $y(x,\lambda)$ be the solution of the equation (\ref{fc1})
satisfying the initial conditions $y(0)=1,\ y'(0)=h$ and the
conditions (\ref{blxtj}). It is well known that $y(x,\lambda)$ is an entire function of $\lambda$ of the order $\frac{1}{2}$. Let $k=\sqrt{\lambda}$,
there exists a bounded function $K(x,t)$
such that
\begin{equation}\label{yyw1}
y(x,\lambda)\tilde{y}(x,\lambda)=\frac{1}{2}+\frac{1}{2}\cos(2kx)+\frac{1}{2}\int_0^xK(x,t)\cos(2kt)dt
\end{equation}
for $0\leq x<\frac{1}{2}$, and
\begin{equation}\label{yyw2}
\begin{array}{rl}
y(x,\lambda)\tilde{y}(x,\lambda)\!\!\!\!&=A_1(\lambda)
+A_2\cos(2kx)+A_3\cos2k(x-\frac{1}{2})\\
&\quad+A_4\cos2k(x-1)+\frac{1}{2}\int_0^xK(x,t)\cos(2kt)dt
\end{array}
\end{equation}
for $\frac{1}{2}<x\leq 1$ and the given parameter $a_1$, where
\begin{equation*}
\left\{\!\!\!\!\!\!\!
\begin{array}{rl}
&A_1(\lambda)=\frac{1}{2a_1^2}+\frac{a_1^2-a_1^{-2}}{2}y(\frac{1}{2}-0,\lambda)
\tilde{y}(\frac{1}{2}-0,\lambda),\\
&A_2=\frac{(a_1+a_1^{-1})^2}{8},\\
&A_3=\frac{a_1^2-a_1^{-2}}{4},\\
&A_4=\frac{(a_1-a_1^{-1})^2}{8}.
\end{array}\right.
\end{equation*}

Under the assumption $a_1=\tilde{a}_1$, one can easily obtain from (\ref{blxtj}) that
\begin{equation}\lb{yz}
(\tilde{y}'y-\tilde{y}y')(\textstyle{\frac{1}{2}+0},\lambda)
=(\tilde{y}'y-\tilde{y}y')(\textstyle{\frac{1}{2}-0},\lambda)+a_1(\tilde{a}_2-a_2)(\tilde{y}y)\left(\frac{1}{2}-0,\lambda\right).
\end{equation}

Let $\sqrt{\lambda}=\sigma+i\tau$. Using the inequality $|\cos
\sqrt{\lambda}x|<e^{|\tau|x}$, we obtain
\begin{equation}\label{yyw11}
|y(x,\lambda)\tilde{y}(x,\lambda)|\leq M_1e^{2b|\tau|}
\end{equation}
for $0\leq x\leq b\leq\frac{1}{2}$, and
\begin{equation}\label{yyw21}
\begin{array}{rl}
|y(x,\lambda)\tilde{y}(x,\lambda)|\!\!\!\!&\leq
M_1e^{|\tau|}+A_2e^{2x|\tau|}+|A_3|e^{(2x-1)|\tau|}+|A_3|e^{|\tau|}\\
&\quad+
|A_4|e^{(2-2x)|\tau|}+M_3e^{2x|\tau|}\\
&\leq M_4e^{2b|\tau|},
\end{array}
\end{equation}
for $\frac{1}{2}\leq x\leq b\leq 1$, where $M_i\; (i=\overline{1,4})$ are some positive constants.

\begin{lemma}\lb{l1}
For $ b\in [0,1]$ and $ Q(\cdot)\in L^2[0,b]$, if there are constants $M_5$ and $M_6$ such that
\begin{equation}\label{xc6}
  \int_0^b\!Q(x)y(x,\lambda)\tilde{y}(x,\lambda)dx+\!M_5y(\textstyle{1 \over 2}-0,\lambda)\tilde{y}(\textstyle{1 \over 2}-0,\lambda)+M_6=0,\quad\forall\lambda\ge0,
\end{equation}
then $Q(x)=0$ a.e. on $[0,b]$, and $M_5=M_6=0$.
\end{lemma}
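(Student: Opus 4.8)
The plan is to substitute the product representations (\ref{yyw1}) and (\ref{yyw2}) into the hypothesis (\ref{xc6}) and reorganize the resulting identity in $k=\sqrt{\lambda}$ as a generalized Fourier cosine expansion, so that its vanishing for all $\lambda\ge0$ forces each ``frequency component'' to vanish separately. Using (\ref{yyw1}) one has $y(\tfrac12-0,\lambda)\tilde y(\tfrac12-0,\lambda)=\tfrac12+\tfrac12\cos k+\tfrac12\int_0^{1/2}K(\tfrac12,t)\cos(2kt)\,dt$, so every term in (\ref{xc6}) becomes either a constant, a multiple of $\cos k=\cos(2k\cdot\tfrac12)$, or an integral $\int\cos(2kt)(\cdots)\,dt$. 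After interchanging the order of integration in the double integrals and substituting $s=x-\tfrac12$ and $s=1-x$ in the shifted cosines $\cos2k(x-\tfrac12)$, $\cos2k(x-1)$ arising from (\ref{yyw2}) when $b>\tfrac12$, the whole left-hand side collapses to
\begin{equation*}
D_0+D_1\cos k+\int_0^{L}\Phi(t)\cos(2kt)\,dt=0,\qquad \forall\,k\ge0,
\end{equation*}
where $L=\max\{b,\tfrac12\}$, the scalars $D_0,D_1$ collect the constant and the $\cos k$ contributions, and $\Phi\in L^2[0,L]$ is the absolutely continuous part, built from $Q$, from $\int_t^bQ(x)K(x,t)\,dx$, and from $K(\tfrac12,\cdot)$, all in $L^2$ since $K$ is bounded.

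The key analytic step is to separate the discrete terms $D_0,D_1\cos k$ from the continuous part. Since $\Phi\in L^2[0,L]$, its cosine transform $\hat\Phi(k):=\int_0^L\Phi(t)\cos(2kt)\,dt$ lies in $L^2(0,\infty)$ by the Plancherel identity for the cosine transform. The identity then reads $D_0+D_1\cos k=-\hat\Phi(k)$, whose right-hand side is square-integrable on $(0,\infty)$; but $\int_0^R(D_0+D_1\cos k)^2\,dk\sim(D_0^2+\tfrac12 D_1^2)R$, which stays finite only if $D_0=D_1=0$. Consequently $\hat\Phi\equiv0$, and injectivity of the Fourier cosine transform (extend $\Phi$ by zero to $[0,\infty)$) yields $\Phi=0$ a.e.\ on $[0,L]$.

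It remains to read off $Q$ and the constants from $\Phi\equiv0$. The coefficient of $Q(t)$ in $\Phi(t)$ is $\tfrac12$ for $t<\tfrac12$ and $A_2=\tfrac{(a_1+a_1^{-1})^2}{8}>0$ for $t>\tfrac12$, both bounded away from zero, while the remaining contributions have the Volterra form $\int_t^{(\cdot)}K(x,t)Q(x)\,dx$. Thus $\Phi=0$ is a homogeneous Volterra integral equation of the second kind with bounded kernel, whose only solution is $Q=0$. For $b>\tfrac12$ one applies this first on $(\tfrac12,b]$, where the equation is self-contained and gives $Q=0$ there, hence $\int_{1/2}^bQ=0$; this reduces the $\cos k$-coefficient to $\tfrac{M_5}{2}$, so $D_1=0$ forces $M_5=0$, after which the equation on $[0,\tfrac12]$ (now free of the $K(\tfrac12,\cdot)$ term) gives $Q=0$ there as well. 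For $b\le\tfrac12$ only (\ref{yyw1}) is needed and a single Volterra step suffices, with $D_1=\tfrac{M_5}{2}$ giving $M_5=0$ directly. Finally, substituting $Q\equiv0$ and $M_5=0$ into $D_0=0$ yields $M_6=0$.

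The main obstacle I expect is the clean extraction of the ``point-mass'' term $\cos k$, located at the distinguished point $t=\tfrac12$, which becomes an \emph{interior} point of $[0,L]$ once $b>\tfrac12$ and thus cannot be separated by any pointwise/endpoint device; this is precisely what the $L^2(0,\infty)$ argument resolves, since a genuine cosine is not square-integrable whereas $\hat\Phi$ is. A secondary, purely bookkeeping difficulty is tracking how the several cosine families in (\ref{yyw2}) (including the $\lambda$-dependent term $A_1(\lambda)$) fold into a single density $\Phi$ on $[0,b]$, and verifying that the back-substitution isolates $M_5$ before $M_6$ in the correct order.
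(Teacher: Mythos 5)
Your argument is correct and follows the same skeleton as the paper's proof (substitute (\ref{yyw1})--(\ref{yyw2}) into the hypothesis, reduce to a cosine identity, eliminate the discrete part, invoke completeness of $\{\cos(2kt)\}_{k\ge0}$, then solve Volterra equations from the right end), but it differs genuinely at two points. First, to kill $D_0$ and $D_1$ the paper lets $k\to+\infty$ and combines the Riemann--Lebesgue lemma with the nonexistence of $\lim_{k\to\infty}\cos k$, and for $b>\tfrac12$ it keeps $y(\tfrac12-0,\lambda)\tilde y(\tfrac12-0,\lambda)$ as an unexpanded oscillating unit in (\ref{QK1}); you instead expand everything via (\ref{yyw1}) at $x=\tfrac12-0$ and use the Plancherel observation that the cosine transform of an $L^2$ density lies in $L^2(0,\infty)$ while $\int_0^R(D_0+D_1\cos k)^2\,dk\sim(D_0^2+\tfrac12D_1^2)R$. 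Both devices are rigorous; yours is arguably cleaner and makes explicit why the ``point mass'' at the interior point $t=\tfrac12$ cannot be absorbed into the density. Second, you compress the paper's zone-by-zone analysis of (\ref{Fxs1})--(\ref{Fxs2}) (four successive intervals, with the case split $b\ge\tfrac34$ versus $\tfrac12<b<\tfrac34$) into two steps, which is legitimate but requires one observation you leave implicit: your sentence ``the remaining contributions have the Volterra form $\int_t^{(\cdot)}K(x,t)Q(x)\,dx$'' is not literally true, since for $b>\tfrac12$ the density also contains the non-Volterra point terms $2A_3Q(t+\tfrac12)$ on $[0,b-\tfrac12]$ and the reflection $2A_4Q(1-t)$ on $[1-b,\tfrac12]$. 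Your reduction on $[0,\tfrac12]$ succeeds precisely because for those ranges of $t$ the arguments $t+\tfrac12$ and $1-t$ always lie in $[\tfrac12,b]$, where your first step already gives $Q=0$ --- this is exactly the content of the paper's successive zones, and it should be stated explicitly, since you mention only the disappearance of the $K(\tfrac12,\cdot)$ term. With that one line added, your recovery order ($Q$ on $(\tfrac12,b]$, then $M_5$ from $D_1=0$, then $Q$ on $[0,\tfrac12]$, then $M_6$ from $D_0=0$) reproduces exactly what the paper extracts from (\ref{xc8}).
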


\begin{proof}
The proof is partly from \cite{YYA}.
Firstly, we discuss the case $b\in[0,\frac{1}{2}]$.
Substituting (\ref{yyw1}) into (\ref{xc6}), we obtain that for all
$k\ge0$,
\begin{equation*}
\begin{array}{l}
\int_0^bQ(x)\left[1+\cos(2kx)\right]dx
+\int_0^bQ(x)\left[\int_0^xK(x,t)\cos(2kt)dt\right]dx\\
+\frac{M_5}{2}[1+\cos k+\int_0^{\frac{1}{2}}K(\frac{1}{2}-0,t)\cos(2kt)dt]+M_6=0,
\end{array}
\end{equation*}
which can be rewritten as
\begin{equation}\lb{Qjfs10}
\begin{array}{l}
\int_0^bQ(x)dx+\int_0^b\cos(2kt)\left[Q(t)
+\int_t^bQ(x)K(x,t)dx\right]dt\\
+\frac{M_5}{2}[1+\cos k+\int_0^{\frac{1}{2}}K(\frac{1}{2}-0,t)\cos(2kt)dt]+M_6=0.
\end{array}
\end{equation}

Letting $k\to+\infty$ in (\ref{Qjfs10}) and observing that the limit of $\cos k$ does't exist for $k\to +\infty$, and using Riemann-Lebesgue lemma we
obtain that
\begin{equation}\lb{xc7}
M_5=0,\quad \int_0^bQ(x)dx+M_6=0,
\end{equation}
and hence
\begin{equation*}
\int_0^b\cos(2kt)\left[ Q(t)+\int_t^bQ(x)K(x,t)dx\right]dt=0.
\end{equation*}
Since the function system $\{\cos(2kt)\}_{k\ge0}$ is complete in $L^2[0,b]$, then
\begin{equation*}
Q(t)+\int_t^bQ(x)K(x,t)dx=0,\ 0<t<b.
\end{equation*}
This equation is a homogeneous Volterra integral equation which
has only the zero solution. Thus $Q(x)=0$ a.e. on $[0,b]$, and $M_6=0$ from Eq.(\ref{xc7}).

Secondly, we consider the case $b\in(1/2,1]$.
Substituting (\ref{yyw1}) and (\ref{yyw2}) into (\ref{xc6}), we obtain, for all $k\ge0$,
\begin{equation}\label{QK1}
\begin{array}{l}
\int_0^{\frac{1}{2}}Q(x)[1+\cos(2kx)+\int_0^xK(x,t)\cos(2kt)dt]dx\\
+\int_{\frac{1}{2}}^bQ(x)[
2A_2\cos(2kx)+2A_3\cos2k(x-\frac{1}{2})
+2A_4\cos2k(x-1)\\+\int_0^xK(x,t)\cos(2kt)dt]dx
+[2M_5+(a_1^2-a_1^{-2})\int_{\frac{1}{2}}^bQ(x)dx]\\ \times y(\textstyle{1 \over 2}-0,\lambda)\tilde{y}(\textstyle{1 \over 2}-0,\lambda)+2M_6+a_1^{-2}\int_{\frac{1}{2}}^bQ(x)dx=0.
\end{array}
\end{equation}

Letting $k\rightarrow+\infty$ in (\ref{QK1}) and observing that the limit of $y(\textstyle{1 \over 2}-0,\lambda)\tilde{y}(\textstyle{1 \over 2}-0,\lambda)$ does't exist for $k\to +\infty$, we see
from Riemann-Lebesgue lemma that
\begin{equation}\lb{xc8}
\left\{\begin{array}{l}
\int_0^{\frac{1}{2}}Q(x)dx+2M_6+a_1^{-2}\!\!\int_{\frac{1}{2}}^bQ(x)dx=0,\\ 2M_5+(a_1^2-a_1^{-2}\!)\!\!\int_{\frac{1}{2}}^bQ(x)dx=0,
\end{array}\right.
\end{equation}
and hence
\begin{equation}\label{QK2}
\begin{array}{l}
\int_0^{\frac{1}{2}}Q(x)[\cos(2kx)+\int_0^xK(x,t)\cos(2kt)dt]dx\\
\qquad+\int_{\frac{1}{2}}^bQ(x)[2A_2\cos(2kx)+2A_3\cos2k(x-\frac{1}{2})\\
\qquad\quad+2A_4\cos2k(x-1)+\int_0^xK(x,t)\cos(2kt)dt]dx=0.
\end{array}
\end{equation}

We will change variables to obtain an equation of the form
\begin{equation*}\lb{F}
\int_0^b\left[F(t)+\int_t^bK(x,t)Q(x)dx\right]\cos(2kt)dt=0,
\end{equation*}
which implies
\begin{equation}\lb{F1}
F(t)+\int_t^bK(x,t)Q(x)dx=0 \mbox{\ \ for\ \ }t\in (0,b),
\end{equation}
since the function system $\{\cos(2kt)\}_{k\ge0}$ is complete in $L^2[0,b]$.
The form of $F(t)$ will alow us to conclude that $Q(x)=0$ a.e. on $[0,b]$. If we can prove it, then it follows from (\ref{xc8}) that $M_5=0$ and $M_6=0$.

We first consider the terms with $K(x,t)$ in (\ref{QK2}). Since $K(x,t)$ is bounded on $(x,t)\in\![0,1]\!\times\![0,1]$
and $Q(x)$ is integrable on $[0,1]$, by Fubini's theorem
\begin{equation}\label{QK3}
\begin{split}
\int_0^{\frac{1}{2}}&Q(x)\!\int_0^xK(x,t)\cos(2kt)dtdx\!+\!\int_{\frac{1}{2}}^bQ(x)\!\int_0^x\!K(x,t)\cos(2kt)dtdx\\
&=\int_0^b\int_t^bK(x,t)Q(x)dx\cos(2kt)dt.
\end{split}
\end{equation}
We next consider the remaining terms in (\ref{QK2}). Specifically
we have
\begin{equation}\label{Q1}
\int_0^{\frac{1}{2}}\!Q(x)\cos(2kx)dx\!+\!\int_{\frac{1}{2}}^b\!2A_2Q(x)\!\cos(2kx)dx\!=\!\int_0^b\widehat{Q}(t)\cos(2kt)dt,
\end{equation}
where
\begin{equation*}
\widehat{Q}(t)=\left\{\begin{array}{ll} Q(t)&\mbox{ for } t\in
[0,\frac{1}{2}],\\
2A_2Q(t)&\mbox{ for } t\in (\frac{1}{2},b],\end{array}\right.
\end{equation*}

\begin{equation}\label{Q2}
\begin{array}{l}
\int_{\frac{1}{2}}^b2A_3Q(x)\cos2k(x-\frac{1}{2})dx=\int_0^{b-\frac{1}{2}}2A_3Q(t+\frac{1}{2})\cos(2kt)dt,
\end{array}
\end{equation}
and
\begin{equation}\label{Q3}
\begin{array}{l}
\int_{\frac{1}{2}}^b2A_4Q(x)\cos2k(x-1)dx=\int_{1-b}^{\frac{1}{2}}2A_4Q(1-t)\cos(2kt)dt.
\end{array}
\end{equation}

Together with Eqs.(\ref{QK3})$-$(\ref{Q3}), the form of $F(t)$ in
(\ref{F1}) is as follows.

If $1\ge b\geq \frac{3}{4}$,
\begin{equation}\label{Fxs1}
F(t)\!\!=\!\!\left\{\begin{array}{ll} Q(t)+2A_3Q(t+\frac{1}{2}),&  t\in
[0,1-b],\\
Q(t)\!\!+\!\!2A_3Q(t+\frac{1}{2})\!\!+\!\!2A_4Q(1-t),&
t\in
[1-b,b-\frac{1}{2}],\\
Q(t)+2A_4Q(1-t),& t\in
[b-\frac{1}{2},\frac{1}{2}],\\
2A_2Q(t),&  t\in [\frac{1}{2},b];
\end{array}\right.
\end{equation}

If $\frac{1}{2}<b<\frac{3}{4}$,
\begin{equation}\label{Fxs2}
F(t)\!\!=\!\!\left\{\begin{array}{ll} Q(t)+2A_3Q(t+\frac{1}{2}), &
 t\in
[0,b-\frac{1}{2}]\\
Q(t),&  t\in
[b-\frac{1}{2},1-b]\\
Q(t)+2A_4Q(1-t),&  t\in
[1-b,\frac{1}{2}]\\
2A_2Q(t),&  t\in [\frac{1}{2},b].
\end{array}\right.
\end{equation}

Now, we shall prove $Q(x)=0$ a.e. on $[0,b]$.

Here we only consider the case $1\ge b\geq\frac{3}{4}$, and the case $\frac{1}{2}<b<\frac{3}{4}$ is similar. From (\ref{F1})
and (\ref{Fxs1}), we see that
\begin{equation*}
2A_2Q(t)+\int_t^bK(x,t)Q(x)dx=0,  \quad  t\in
[\textstyle{\frac{1}{2}},b].
\end{equation*}

Observe from (\ref{yyw2}) that $A_2\neq 0$. Thus, this is a homogeneous
Volterra integral equation, then $Q(t)=0$ a.e. on
$[\frac{1}{2},b]$.

When $t\in [b-\frac{1}{2},\frac{1}{2}]$, then $1-t\in
[\frac{1}{2},\frac{3}{2}-b]$, so $Q(1-t)=0$ for almost all $t\in
[b-\frac{1}{2},\frac{1}{2}]$. This implies, from (\ref{F1}) and (\ref{Fxs1}), that
\begin{equation*}
Q(t)+\int_t^{\frac{1}{2}}K(x,t)Q(x)dx=0,  \quad t\in
\textstyle{[b-\frac{1}{2},\frac{1}{2}]},
\end{equation*}
which implies $Q(t)=0$ a.e. on $[b-\frac{1}{2},\frac{1}{2}]$.

When $t\in [1-b,b-\frac{1}{2}]$, it follows $t+\frac{1}{2},
1-t\in [\frac{3}{2}-b,b]\subset[b-\frac{1}{2},b]$, thus,
$Q(1-t)=Q(t+\frac{1}{2})=0$ for almost all $t\in [1-b,b-\frac{1}{2}]$.
This implies, from (\ref{F1}) and (\ref{Fxs1}), that
\begin{equation*}
Q(t)+\int_t^{b-\frac{1}{2}}K(x,t)Q(x)dx=0,  \quad  t\in
\textstyle{[1-b,b-\frac{1}{2}]},
\end{equation*}
which implies $Q(t)=0$ a.e. on $[1-b,b-\frac{1}{2}]$.

When $t\in [0,1-b]$, it follows $t+\frac{1}{2}\in
[\frac{1}{2},\frac{3}{2}-b]$, so $Q(t+\frac{1}{2})=0$ for almost all
$t\in [0,1-b]$. It follows from (\ref{F1}) and (\ref{Fxs1}) that
\begin{equation*}
Q(t)+\int_t^{1-b}K(x,t)Q(x)dx=0,  \quad  t\in
[0,1-b],
\end{equation*}
which implies $Q(t)=0$ a.e. on $[0,1-b]$.

Therefore, in the case $1\ge b\geq\frac{3}{4}$, $Q(x)=0$ a.e. on
$[0,b]$. In the case $\frac{1}{2}<b<\frac{3}{4}$, the proof is similar. Consequently, $Q(x)=0$ a.e. on $[0,b]$ for $b\in(\frac{1}{2},1]$. The proof is complete.
\end{proof}
In order to prove the main results, we also need the following two lemmas. One can find them in \cite{GS2,LEV}.
\begin{lemma}\lb{l3}
Assume that $E(\lambda)$ is an entire function of order less than one. If $\mathop {\lim }\limits_{ |t|\to \infty ,t \in \mathbb{R}}  E(it)=0$, then $E(\lambda)\equiv 0 $ on the whole complex plane.
\end{lemma}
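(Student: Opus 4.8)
The plan is to reduce the statement to the Phragmén--Lindel\"of principle applied to the two half-planes bordering the imaginary axis, followed by Liouville's theorem. Since $E$ is of order strictly less than one, I would first fix an exponent $\rho'$ with $\operatorname{ord}(E)<\rho'<1$, so that there is a constant $C>0$ with
\[
|E(\lambda)|\le C\exp\!\left(|\lambda|^{\rho'}\right),\qquad \lambda\in\mathbb{C}.
\]
The hypothesis $\lim_{|t|\to\infty}E(it)=0$ guarantees in particular that $E$ is bounded on the imaginary axis, so I set $M:=\sup_{t\in\mathbb{R}}|E(it)|<\infty$.

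Next I would apply Phragmén--Lindel\"of separately to the closed right half-plane $\{\operatorname{Re}\lambda\ge0\}$ and the closed left half-plane $\{\operatorname{Re}\lambda\le0\}$. Each is a sector of opening angle $\pi$, whose boundary is the imaginary axis on which $|E|\le M$. Because the growth exponent $\rho'$ above is strictly smaller than $1$, which is the critical exponent $\pi/\pi$ for a half-plane, the Phragmén--Lindel\"of principle applies in each sector and yields $|E(\lambda)|\le M$ throughout it. Combining the two estimates shows that $E$ is bounded on all of $\mathbb{C}$ by $M$.

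Finally, a bounded entire function is constant by Liouville's theorem, and evaluating the constant along the imaginary axis gives $E(\lambda)\equiv\lim_{|t|\to\infty}E(it)=0$. The only delicate point is the matching between the order of $E$ and the opening angle of the half-plane: the hypothesis that the order is strictly less than one is exactly what prevents the admissible growth of $E$ from overwhelming the boundary bound in the Phragmén--Lindel\"of comparison (for order equal to one the conclusion would fail, as $e^{\lambda}$ shows). Since this is a classical fact (cited here from \cite{GS2,LEV}), I expect the write-up to be short, with verification of the growth hypothesis being the only step that needs care.
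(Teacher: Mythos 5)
Your proof is correct. The paper does not prove this lemma at all---it simply cites \cite{GS2,LEV}---and your argument (Phragm\'en--Lindel\"of on the two half-planes bounded by the imaginary axis, where the order-less-than-one bound $|E(\lambda)|\le C\exp(|\lambda|^{\rho'})$, $\rho'<1$, stays strictly below the critical exponent $1$ for a sector of opening $\pi$, followed by Liouville's theorem and evaluation of the constant along the imaginary axis) is precisely the classical proof found in those references, with the needed boundedness of $E$ on the imaginary axis correctly extracted from continuity together with the hypothesis $\lim_{|t|\to\infty}E(it)=0$.
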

\begin{lemma}\lb{l2}
For any entire function $g(k)\not\equiv0$ of exponential type, the following inequality holds,
 \begin{equation*}
\mathop {\varliminf }\limits_{r \to \infty }  \frac{n(r)}{r}\leq\frac{1}{2\pi}\int_0^{2\pi}h_g(\theta)d\theta,
 \end{equation*}
where $n(r)$ is the number of zeros of $g(k)$ in the disk $|k|\leq r$ and $h_g(\theta):=\mathop {\varlimsup }\limits_{r \to \infty }\frac{\ln |g(re^{i\theta})|}{r}$ with $k=re^{i\theta}$.
\end{lemma}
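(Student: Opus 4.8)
The plan is to derive this classical density bound from Jensen's formula, which converts the count of zeros of $g$ into a boundary integral of $\ln|g|$. Assume first that $g(0)\neq0$; the general case reduces to this by factoring out the zero of order $m$ at the origin, writing $g(k)=k^mg_1(k)$ with $g_1(0)\neq0$, since the factor $k^m$ changes $n(t)$ only by the additive constant $m$ and contributes $m\ln r$ to $\ln|g|$, neither of which alters $\varliminf_{r\to\infty}n(r)/r$ or $h_g(\theta)$. Under $g(0)\neq0$, Jensen's formula gives, for every $r>0$,
\[
\int_0^r\frac{n(t)}{t}\,dt=\frac{1}{2\pi}\int_0^{2\pi}\ln|g(re^{i\theta})|\,d\theta-\ln|g(0)|.
\]

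Second, I would pass from this averaged count on the left to $n(r)/r$ itself, using the monotonicity of $n$. Set $L:=\varliminf_{r\to\infty}n(r)/r$. By definition of the lower limit, for any $\epsilon>0$ there is $r_0$ with $n(t)\ge(L-\epsilon)t$ for all $t\ge r_0$, whence $\frac1r\int_0^r\frac{n(t)}{t}\,dt\ge(L-\epsilon)\frac{r-r_0}{r}$; letting $r\to\infty$ yields $\varliminf_{r\to\infty}\frac1r\int_0^r\frac{n(t)}{t}\,dt\ge L$. Dividing the Jensen identity by $r$ and noting $\ln|g(0)|/r\to0$, this produces
\[
L\le\varliminf_{r\to\infty}\frac{1}{2\pi r}\int_0^{2\pi}\ln|g(re^{i\theta})|\,d\theta\le\varlimsup_{r\to\infty}\frac{1}{2\pi}\int_0^{2\pi}\frac{\ln|g(re^{i\theta})|}{r}\,d\theta.
\]

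Finally I would bound the boundary average by the indicator integral. Because $g$ is of exponential type, there are constants $M,C$ with $|g(k)|\le Me^{C|k|}$, so $\frac{\ln|g(re^{i\theta})|}{r}\le\frac{\ln M}{r}+C$ is bounded above uniformly in $\theta$ for large $r$. This uniform upper bound is the integrable dominating function required by the reverse Fatou lemma, and applying it gives
\[
\varlimsup_{r\to\infty}\frac{1}{2\pi}\int_0^{2\pi}\frac{\ln|g(re^{i\theta})|}{r}\,d\theta\le\frac{1}{2\pi}\int_0^{2\pi}\varlimsup_{r\to\infty}\frac{\ln|g(re^{i\theta})|}{r}\,d\theta=\frac{1}{2\pi}\int_0^{2\pi}h_g(\theta)\,d\theta.
\]
Chaining the three displays yields the asserted inequality. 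The main obstacle is precisely this last step: one must interchange $\varlimsup$ with the integral in the direction opposite to ordinary Fatou, and this is exactly where the exponential-type hypothesis is indispensable, since it rules out the unbounded upward fluctuations that would otherwise invalidate the reverse inequality. By comparison, the monotonicity estimate for $n(r)$ and the reduction to the case $g(0)\neq0$ are routine.
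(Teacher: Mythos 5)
Your proof is correct. Note that the paper itself does not prove this lemma at all: it is quoted with the remark ``One can find them in \cite{GS2,LEV}'', so there is no internal argument to compare against; your Jensen-formula route is in fact the standard proof given in those references (in particular in the appendix of Gesztesy--Simon \cite{GS2}). The three ingredients are all sound: the reduction to $g(0)\neq0$ changes neither $\varliminf_{r\to\infty}n(r)/r$ nor $h_g$; the passage from the definition of $L=\varliminf_{r\to\infty}n(r)/r$ to $\varliminf_{r\to\infty}\frac1r\int_0^r\frac{n(t)}{t}\,dt\ge L$ is valid (and is cleaner for the liminf version than the usual $\int_r^{er}n(t)t^{-1}\,dt\ge n(r)$ trick, which would cost a factor $e$); and the exponential-type bound $\ln|g(re^{i\theta})|/r\le \ln M/r + C$ supplies exactly the uniform integrable majorant that the reverse Fatou lemma needs. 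Two small technicalities are glossed over but are routine: reverse Fatou is a statement about sequences, so one should pick a sequence $r_n\to\infty$ realizing $\varlimsup_{r\to\infty}\frac{1}{2\pi}\int_0^{2\pi}\ln|g(re^{i\theta})|\,r^{-1}\,d\theta$ and use $\varlimsup_n \ln|g(r_ne^{i\theta})|/r_n\le h_g(\theta)$ pointwise; and the final comparison with $\int_0^{2\pi}h_g(\theta)\,d\theta$ tacitly uses measurability of $h_g$, which holds here (indicators of functions of exponential type are even continuous, by trigonometric convexity, cf.\ \cite{LEV}), though for the inequality the sequential limsup bound already suffices.
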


\section{Proofs}
This section provides the proofs of Theorem \ref{2.0} and Corollary \ref{2.1}.
The proof of Theorem \ref{3.0} is similar to that of Theorem \ref{2.0},
thus we omit it.

\begin{proof}[Proof of Theorem \ref{2.0}]
We consider two boundary value problems: one is the problem $B(q,h,H,a_1,a_2)$ and the other is
$B(\tilde{q},\tilde{h},\tilde{H},\tilde{a}_1,\tilde{a}_2)$, which produces the same data as in Theorem \ref{2.0}. Now under the corresponding assumptions in Theorem \ref{2.0}, we try to prove $B(q,h,H,a_1,a_2)=B(\tilde{q},\tilde{h},\tilde{H},\tilde{a}_1,\tilde{a}_2)$.

Firstly, from the assumptions of Theorem \ref{2.0} and the formulas (\ref{lbdn})$-$(\ref{xc1}), one can easily obtain that
$a_1=\tilde{a}_1$ and $H=\tilde{H}$.

{(i)} Recall $\lambda=k^2$ and $b\in[\frac{1}{2},1]$, and denote
\begin{equation}\label{yywm1}
\begin{split}
g(k):=
&\int_0^b[\tilde{q}(x)-q(x)]y(x,\lambda)\tilde{y}(x,\lambda)dx+(\tilde{h}-h)\\
&\quad +a_1(\tilde{a}_2-a_2)y(\textstyle{\frac{1}{2}-0},\lambda)\tilde{y}(\textstyle{\frac{1}{2}-0},\lambda).
\end{split}
\end{equation}
Since $q(x)=\tilde{q}(x)$ a.e. on $[b,1]$ then
\begin{equation*}
\begin{split}
g(k)=
&\int_0^1[\tilde{q}(x)-q(x)]y(x,\lambda)\tilde{y}(x,\lambda)dx+(\tilde{h}-h)\\
&\quad +a_1(\tilde{a}_2-a_2)y(\textstyle{\frac{1}{2}-0},\lambda)\tilde{y}(\textstyle{\frac{1}{2}-0},\lambda).
\end{split}
\end{equation*}
Note that for fixed $\lambda$ there holds
 \begin{equation*}\label{xc10}
 \begin{split}
\int_0^1[\tilde{q}(x)-q(x)]y(x,\lambda)\tilde{y}(x,\lambda)dx=
&(\tilde{y}'y-\tilde{y}y')(x,\lambda)|_0^{\frac{1}{2}-0}\\
& +(\tilde{y}'y-\tilde{y}y')(x,\lambda)|_{\frac{1}{2}+0}^1,
\end{split}
\end{equation*}
together with (\ref{yz}) and the initial values of $y(x,\lambda)$ and $\tilde{y}(x,\lambda)$ at $x=0$, we can transform (\ref{yywm1}) into
\begin{equation}\label{xc12}
  g(k)=(\tilde{y}'y-\tilde{y}y')(1,\lambda).
\end{equation}
 It follows from (\ref{btj}) and (\ref{xc12}) with $H=\tilde{H}$ that
\begin{equation}\lb{gg3}
 g(k)=0\quad \text{at}\quad k=\pm \sqrt{\kappa_{n}}\quad\text{for} \quad\kappa_{n}\in S_1\cup S_2.
\end{equation}

From (\ref{yyw11}), (\ref{yyw21}) and (\ref{yywm1}), we see that $g(k)$ is an entire
function of $k$ of exponential type $\leq 2b$, and satisfies
\begin{equation}\lb{hlbd5}
|g(k)|\leq C_0e^{2b|{\rm Im}k|}
\end{equation}
for some positive constant $C_0$.
Since $|{\rm Im}k|=r|\sin\theta|$, where $k=re^{i\theta}$, it follows from (\ref{hlbd5}) that
\begin{equation*}
h_g(\theta):=\mathop {\varlimsup }\limits_{r \to \infty }\frac{\ln |g(re^{i\theta})|}{r}\leq2b|\sin\theta|,
\end{equation*}
which implies
\begin{equation}\label{gg6}
 \frac{1}{2\pi}\int_0^{2\pi}h_g(\theta)d\theta\leq\frac{2b}{2\pi}\int_0^{2\pi}|\sin\theta|d\theta=\frac{4b}{\pi}.
\end{equation}

On the other hand, recalling the definitions of the functions $N_{\Lambda_i}(r)$ ($i=1,2$), and using (\ref{lbdn}) and (\ref{xc}), one gets
\begin{equation}\label{cx4}
 N_{\Lambda_i}(r)=\frac{r}{\pi}[1+o(1)],\quad r\to \infty,\quad i=1,2.
\end{equation}
Let $n(r)$ be the number of zeros of $g(k)$ in the disk $|k| \le r$, then using (\ref{xc2}) and (\ref{cx4}) one obtains
\begin{equation}\label{gg5}
n(r)\ge 2N_{S_1\cup S_2}(r)\ge\frac{4\sigma r}{\pi}[1+o(1)],\quad r\to \infty.
\end{equation}

Using Lemma \ref{l2}, together with (\ref{gg6}) and (\ref{gg5}), we obtain $\sigma\leq b$ if the entire function $g(k)\not\equiv0$. However, now $\sigma>b$, which implies that $g(k)\equiv0$ on the whole complex plane. Therefore, it follows from Lemma \ref{l1} that $q(x)=\tilde{q}(x)$ a.e. on $[0,b]$, $a_2=\tilde{a}_2$ and $h=\tilde{h}$.

{(ii)} Recall $\{\kappa_{i,n}\}_{n\in \mathbb{N}_0}= S_i$, $i=1,2$.
Define
\begin{equation}\label{2g}
G(\lambda):=g(k),\
\Phi(\lambda):=\prod_{n=0}^\infty\left(1-\frac{\lambda}{\kappa_{1,n}}\right)\left(1-\frac{\lambda}{\kappa_{2,n}}\right),\ E(\lambda):=\frac{G(\lambda)}{\Phi(\lambda)}.
\end{equation}
Note that when $\kappa_{1,n}=0$ or $\kappa_{2,n}=0$ the expression $\Phi(\lambda)$ requires a minor modification.
From (\ref{yyw1}), (\ref{yyw2}) and (\ref{yywm1}), we know that $ G(\lambda)$ is an entire function of $\lambda$ of order at most $\frac{1}{2}$. We shall show that the function $ \Phi(\lambda)$ is also an entire function of order at most $\frac{1}{2}$. If it is true, then it follows from (\ref{gg3}) that $ E(\lambda)$ is an entire function of $\lambda$ of order at most $\frac{1}{2}$.

By virtue of (\ref{lbdn}) and (\ref{xc}), we have
\begin{equation}\label{cx5}
  \frac{1}{\kappa_{i,n}}=O\left(\frac{1}{n^2}\right),\quad  i=1,2,\quad n\to\infty,
\end{equation}
which implies that the series
\begin{equation*}
\sum_{n=0}^\infty\left(\left|\frac{\lambda}{\kappa_{1,n}}\right|+\left|\frac{\lambda}{\kappa_{2,n}}\right|\right)
\end{equation*}
converges uniformly on bounded subsets of $\mathbb{C}$.
Therefore, the infinite product $\Phi(\lambda)$ in (\ref{2g}) converges to an entire function of $\lambda$, whose roots are exactly $\kappa_{1,n}$ and $\kappa_{2,n}$, $n\in\mathbb{N}$. Denote
\begin{equation*}
  \rho_\Phi:=\inf\left\{p:\sum_{n=0}^\infty\left(\frac{1}{|\kappa_{1,n}|^p}+\frac{1}{|\kappa_{2,n}|^p}\right)<\infty\right\},
\end{equation*}
which is called \emph{convergence exponent of zeros} of the canonical product of $\Phi$ in (\ref{2g}). Clearly, $\rho_\Phi\le\frac{1}{2}$ by the estimates (\ref{cx5}). Since the order of canonical product of an entire function is equal to its convergence exponent of zeros (see \cite[p.16]{LEV}), thus we conclude that the order of canonical product of $\Phi$ is at most $\frac{1}{2}$. By Hadamard's factorization theorem, the infinite product in (\ref{2g}) is the canonical product of the function $\Phi(\lambda)$, and so the order of $\Phi$ is at most $\frac{1}{2}$.

From Lemma \ref{l3}, we know that it is sufficient to show that
\begin{equation}\label{g6}
\mathop {\lim }\limits_{ |t|\to \infty ,t \in \mathbb{R}}  E(it)=0.
\end{equation}

Denote
\begin{equation}\label{xc15}
 \alpha_n:=\frac{n^2\pi^2}{\sigma_1^2},\quad\beta_n:=\frac{\gamma_n^2}{\sigma_2^2},\quad n\in\mathbb{N}_0,
\end{equation}
 and
\begin{equation}\label{xc14}
\begin{split}
\Phi_0(\lambda):&=\sigma_1^{-1}\sqrt{\lambda}\sin(\sigma_1\sqrt{\lambda})[\cos(\sigma_2\sqrt{\lambda})+a]\\
&=(1+a)\lambda\prod_{n=1}^\infty\left(1-\frac{\lambda}{\alpha_n}\right)\prod_{n=0}^\infty\left(1-\frac{\lambda}{\beta_n}\right).
\end{split}
\end{equation}
Recalling the well known inequalities \cite{FYU1,YUR3}, for sufficiently large real $t$ and some positive constant $M$ independent of $t$, there hold
\begin{equation}\label{xc16}
  |\sin(\sigma_1\sqrt{it})|\ge Me^{\sigma_1|{\rm Im}\sqrt{it}|},\quad |\cos(\sigma_2\sqrt{it})|\ge Me^{\sigma_2|{\rm Im}\sqrt{it}|}.
\end{equation}
Indeed, one can choose
$$M=\min\left\{\inf_{|t|>1}e^{-\sigma_1|{\rm Im}\sqrt{it}|}|\sin(\sigma_1\sqrt{it})|,\inf_{|t|>1}e^{-\sigma_2|{\rm Im}\sqrt{it}|}|\cos(\sigma_2\sqrt{it})|\right\}.$$
Thus, for sufficiently large real $t$,
\begin{equation}\label{xc17}
  |\Phi_0(it)|\ge C_1 \sqrt{|t|}e^{(\sigma_1+\sigma_2)|{\rm Im}\sqrt{it}|}=C_1 \sqrt{|t|}e^{2b|{\rm Im}\sqrt{it}|}.
\end{equation}
Here $C_1$ and the following $C_2,C_3,C_4$ are all some positive constants independent of $t$.
Together with (\ref{hlbd5}), (\ref{2g}), (\ref{xc14}) and (\ref{xc17}), we have
\begin{equation}\label{g7}
\begin{split}
\!\!\!| E(it)|\!=\left|\frac{G(it)}{\Phi_0(it)} \right|\left|\frac{\Phi_0(it)}{\Phi(it)} \right|&\leq C_2\frac{e^{2b|{\mathop{\rm Im}\nolimits} \sqrt {it} |}}{\sqrt{|t|}}\frac{1}{e^{2b|{\mathop{\rm Im}\nolimits} \sqrt {it} |}}\prod_{n=1}^\infty\left|\frac{1-\frac{it}{\alpha_n}}{1-\frac{it}{\kappa_{1,n}}}\right|\left|\frac{1-\frac{it}{{\beta}_n}}{1-\frac{it}{\kappa_{2,n}}}\right|\\
  &\!=\frac{C_2}{\sqrt{|t|}}\prod_{n=1}^\infty\left(\left|\frac{1+\frac{t^2}{\alpha_n^2}}{1+\frac{t^2}{\kappa_{1,n}^2}}\right|
  \left|\frac{1+\frac{t^2}{{\beta}_n^2}}{1+\frac{t^2}{\kappa_{2,n}^2}}\right|\right)^{1/2}.
\end{split}
\end{equation}
Using the inequalities
\begin{equation*}
  \frac{1+c}{1+d}\le\frac{c}{d} \quad \text{if} \quad c\ge d\ge 0;\quad \frac{1+c}{1+d}\le1 \quad \text{if} \quad d\ge c\ge 0,
\end{equation*}
and noting that $\#\{n:\kappa_{1,n}<0 \text{ or } \kappa_{2,n}<0\}<\infty$,
we have
\begin{equation}\label{cx6}
\begin{split}
|E(it)|&\leq\frac{C_3}{\sqrt{|t|}}\prod_{\kappa_{1,n}\ge\alpha_n}\frac{\kappa_{1,n}}{\alpha_n}
\prod_{\kappa_{2,n}\ge\beta_n}\frac{\kappa_{2,n}}{\beta_n}\\
&\le\frac{C_3}{\sqrt{|t|}}\prod_{n=0}^\infty\left[1+\frac{(\kappa_{1,n}-\alpha_n)_+}{\alpha_n}\right]\prod_{n=0}^\infty\left[1+\frac{(\kappa_{2,n}-\beta_n)_+}{\beta_n}\right].
\end{split}
\end{equation}
It follows from (\ref{cx0}) and (\ref{xc15}) that for sufficiently large $|t|$,
\begin{equation*}
   | E(it)|\leq\frac{C_4}{\sqrt{|t|}}.
\end{equation*}
This implies that the assertion (\ref{g6}) holds. We have finished the proof.
\end{proof}
\begin{proof}[Proof of Corollary \ref{2.1}]
We agree that $\sigma_2=0$ means that $S_2=\emptyset$.
Let $b=\frac{1}{2}$ in (\ref{yywm1}), and set $\sigma_1=1,\sigma_2=0$, $a_1=\tilde{a}_1$ and $H=\tilde{H}$ in the part {(ii)} of the proof in Theorem \ref{2.0} to obtain the result: $q(x)=\tilde{q}(x)$ a.e. on $[0,\frac{1}{2}]$, $a_2=\tilde{a}_2$ and $h=\tilde{h}$.
\end{proof}

\noindent {\bf Acknowledgments.} The authors would like to thank the referees for valuable suggestions and comments.
The author Xu was supported by Innovation Program for Graduate Students of Jiangsu Province of China (KYLX16\_0412). The authors Xu and Yang were supported in part by the National Natural Science Foundation of
China (11171152 and 91538108) and Natural Science Foundation of Jiangsu Province of China (BK 20141392).

\end{document}